\newtheorem{cor}[equation]{Corollary}
\newtheorem{lemma}[equation]{Lemma}
\newtheorem{prop}[equation]{Proposition}
\newtheorem{theorem}[equation]{Theorem}
\theoremstyle{definition}
\def\IN{\mathbb N}
\def\IR{\mathbb R}
\def\eps{\varepsilon}
\newcommand{\area}{\operatorname{area}}
\begin{document}
\title{A remark on the rigidity of the first conformal {S}teklov eigenvalue}

\author{Henrik Matthiesen}
\address{Henrik Matthiesen: Department of Mathematics, University of Chicago,
5734 S. University Ave, Chicago, Illinois 60637}
\email{hmatthiesen@math.uchicago.edu}

\author{Romain Petrides}
\address{Romain Petrides, Universit\'e de Paris, Institut de Math\'ematiques de Jussieu - Paris Rive Gauche, b\^atiment Sophie Germain, 75205 PARIS Cedex 13, France}
\email{romain.petrides@imj-prg.fr}
\date{\today}

\begin{abstract} 
We show rigidity of the first conformal Steklov eigenvalue on annuli and M{\"o}bius bands.
The proof relies, among others, on uniqueness results due to Fraser--Schoen, a compactness theorem of the second named author, and recent work of the authors on asymptotic control of Steklov eigenvalues in glueing constructions.
\end{abstract}

\maketitle

For a compact and connected surface $\Sigma$ with smooth, non-empty boundary that is endowed with a smooth metric $g$, 
we denote by $\sigma_1(\Sigma,g)$ the first positive Steklov eigenvalue.
This is the smallest non-zero eigenvalue of the Dirichlet-to-Neumann operator given by $Tu = \partial_\nu \hat u$ for $u \in C^\infty(\partial \Sigma)$, where $\hat u \in C^\infty(\Sigma)$ denotes the harmonic extension of $u$ to all of $\Sigma$, and $\nu$ is the outward-pointing normal vector field along $\partial \Sigma$.
We also write $L_g(\partial \Sigma)$ for the length of the boundary of $\Sigma$.

We show the following rigidity result for the first Steklov eigenvalue in a conformal class on annuli and M{\"o}bius bands.

\begin{theorem} \label{thm_annuli}
Let $\Sigma$ be an annulus or a M{\"o}bius band endowed with a flat metric $g$ then
there is $\omega \colon \Sigma \to \IR$ such that
\begin{equation} \label{eq_gap}
\sigma_1 \left(\Sigma,e^{2\omega} g \right) L_{e^{2 \omega} g}(\partial \Sigma) > 2 \pi.
\end{equation}
\end{theorem}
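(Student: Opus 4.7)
Proof plan: Set $V([g]) := \sup_\omega \sigma_1(e^{2\omega}g)\, L_{e^{2\omega}g}(\partial\Sigma)$, so the theorem is the strict inequality $V([g]) > 2\pi$. The weak inequality $V([g]) \geq 2\pi$ is standard: a conformal factor concentrated near a boundary point yields metrics whose bulk is modeled in the limit on the round disk, for which $\sigma_1 L = 2\pi$. All the content is in the strict inequality.

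I would proceed by contradiction, assuming $V([g]) = 2\pi$. Fix a maximizing sequence $g_n = e^{2\omega_n} g$ in $[g]$ with $\sigma_1(g_n) L_{g_n}(\partial \Sigma) \to 2\pi$. By the second named author's compactness theorem, after passing to a subsequence one of two alternatives occurs: either (i) $g_n$ subconverges, up to diffeomorphism and rescaling, to a smooth maximizer $g_\infty \in [g]$; or (ii) the sequence degenerates, with one or more disk bubbles forming at boundary points, whose boundary contributions account for the limit $2\pi$ of $\sigma_1 L$.

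In case (i), the maximizer $g_\infty$ admits a free-boundary minimal isometric immersion of $(\Sigma, g_\infty)$ into some Euclidean ball by first Steklov eigenfunctions. The Fraser--Schoen uniqueness theorem then identifies this immersion with the critical catenoid (for annuli) or the critical M\"obius band, both of which have $\sigma_1 L > 2\pi$ by explicit computation; this contradicts $V([g]) = 2\pi$. So case (ii) must hold, and here I would invoke the authors' asymptotic glueing result. It yields a quantitative expansion of $\sigma_1 L$ along the degeneration in terms of the bubble parameters and the fixed conformal structure $[g]$, and, crucially, shows that by a suitable choice of glueing parameters within $[g]$ the leading correction beyond $2\pi$ can be arranged to be strictly positive. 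This produces an explicit metric $e^{2\omega}g \in [g]$ with $\sigma_1 L > 2\pi$, again contradicting $V([g]) = 2\pi$.

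The principal obstacle is the sign analysis in this final step: one must verify that the leading correction in the glueing asymptotic is strictly positive in every conformal class of annuli and of M\"obius bands, taking into account the constraint that the glued metric represent the prescribed class $[g]$. This positivity is the substantive geometric content of the proof and rests essentially on the authors' prior glueing analysis cited in the abstract.
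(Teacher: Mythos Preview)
Your case (ii) is where the argument breaks. The glueing construction of \cite{MP}, recorded here as \cref{thm_mp}, does not produce metrics in a prescribed conformal class: the surfaces $\Sigma_\eps$ have conformal modulus determined by $\eps$, and \cref{lem_conformal_2} shows that this modulus tends to the boundary of moduli space ($r \to 1$) as $\eps \to 0$. There is no free parameter available to force $\Sigma_\eps \in [g]$ for a given $[g]$, so for a conformal class with modulus bounded away from $1$ your bubbling alternative yields no contradiction. You correctly flag this constraint as the ``principal obstacle'' but do not resolve it, and with the tools cited it cannot be resolved along these lines: the paper itself notes that \cite{MP} gives the strict inequality only for ``an entire family of specifically degenerating conformal classes,'' not for all of them. (There is also a smaller issue in case (i): \cref{thm_existence} as stated requires the strict gap $V>2\pi$ to produce a smooth maximizer, so under your hypothesis $V([g])=2\pi$ the convergence/bubbling dichotomy you invoke is not directly available.)

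The paper's argument is instead global over the one-dimensional moduli space. One studies $r \mapsto \overline{\sigma}_1(r) := V([A_r])$ on $(0,1)$. Two independent inputs give $\overline{\sigma}_1 > 2\pi$ near each end: Dittmar's flat computation (\cref{thm_comp_flat}) for small $r$, and \cref{thm_mp} together with \cref{lem_conformal_2} for $r$ near $1$; one also has $\overline{\sigma}_1 \to 2\pi$ at both ends. If $\overline{\sigma}_1(s) \le 2\pi$ for some $s$, then on the side of $s$ away from the critical-catenoid modulus $r_\star$ the function must attain an interior maximum strictly above $2\pi$ at some $t \neq r_\star$ (continuity on the superlevel sets coming from \cref{thm_existence}). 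At $t$ the gap hypothesis of \cref{thm_existence} holds, a smooth conformal maximizer exists, and \cref{thm_connection} yields a free-boundary minimal annulus by first eigenfunctions; Fraser--Schoen uniqueness (\cref{thm_fs}) then forces $t = r_\star$, a contradiction. Thus uniqueness is used to pin down where interior maxima of $\overline{\sigma}_1$ can occur, and the glueing result is invoked only near $r = 1$, exactly where it applies.
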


Note that 
$$
\sigma_1(\mathbb{D}^2, \xi) L_\xi(\partial \mathbb{D}^2) = 2\pi,
$$
where $\xi$ is the standard metric.
Moreover, the $\xi$ maximizes the first Steklov eigenvalue among all metrics on the flat disk thanks to Weinstock's inequality, \cite{weinstock}.

It is conjectured that \cref{thm_annuli} holds for any $\Sigma$ not diffeomorphic to the disk.
We remark that the analogous result for the first Laplace eigenvalue on closed surfaces is known by work of the second author \cite{petrides}, see also \cite{petrides-2} for the higher dimensional case.

As a consequence of the work in \cite{petrides-3} and \cref{thm_annuli} we also obtain

\begin{theorem} \label{thm_harmonic}
Let $\Sigma$ be an annulus or a M{\"o}bius band endowed with a flat metric $g$ then there 
there is a smooth function $\omega \colon \Sigma \to \IR$ such that
$$
\sigma_1 \left(\Sigma,e^{2\omega} g \right) L_{e^{2 \omega} g}(\partial \Sigma) 
\geq
\sigma_1 \left(\Sigma,e^{2\tau} g \right) L_{e^{2 \tau} g}(\partial \Sigma) 
$$
for any smooth $\tau \colon \Sigma \to \IR$.
In particular, there is a free boundary harmonic map $\Phi \colon (\Sigma,g) \to \mathbb{B}^N$ by first eigenfunctions.
Here, $N \leq 3$ if $\Sigma$ is an annulus and $N \leq 4$ if $\Sigma$ is a M{\"o}bius band.
\end{theorem}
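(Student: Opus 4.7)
The plan is to use \cref{thm_annuli} as the spectral gap that activates the existence theory of \cite{petrides-3}. Writing
\[
\Lambda^*(\Sigma,[g]) := \sup_{\tau \in C^\infty(\Sigma)} \sigma_1(\Sigma,e^{2\tau}g) L_{e^{2\tau}g}(\partial \Sigma),
\]
the first assertion of the theorem is precisely that this supremum is attained by a smooth conformal factor. From \cref{thm_annuli} we already know $\Lambda^*(\Sigma,[g]) > 2\pi$, i.e., the supremum strictly exceeds the value $\sigma_1 L$ realized on the flat disk.

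The core input will be the existence/compactness result of \cite{petrides-3}, which for a conformal class on a surface with boundary establishes the following dichotomy: either a maximizing sequence for $\Lambda^*$ admits a smooth conformal limit realizing the supremum, or the sequence degenerates, with the only obstruction being conformal bubbling onto a disk, which in turn forces $\Lambda^* \leq 2\pi$. Combining this dichotomy with our strict gap rules out the degenerate alternative, producing the desired smooth maximizer $\omega$. This is where \cref{thm_annuli} is used essentially, and is the main obstacle in the argument; all that remains is to unpack what being a maximizer implies.

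Once a smooth maximizer $\omega$ is in hand, the free boundary harmonic map follows from the standard variational argument of Fraser--Schoen as developed in the conformal setting in \cite{petrides-3}. One considers one-parameter variations $\omega + t\psi$ of the conformal factor, uses that the first variation of $\sigma_1 L$ vanishes at $\omega$ to express it as a boundary integral of a quadratic form in first Dirichlet-to-Neumann eigenfunctions, and then applies a Hersch-type balancing to produce orthonormal first eigenfunctions $\phi_1,\dots,\phi_N$ with $\sum_i \phi_i^2 \equiv 1$ on $\partial \Sigma$. The map $\Phi := (\phi_1,\dots,\phi_N)$ is then harmonic on $\Sigma$, maps $\partial \Sigma$ into $\mathbb{S}^{N-1}$, and satisfies $\partial_\nu \Phi = \sigma_1 \Phi$ on $\partial \Sigma$, which is exactly the free boundary harmonic map condition into $\mathbb{B}^N$ by first eigenfunctions.

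Finally, the dimension bounds $N \leq 3$ for annuli and $N \leq 4$ for M\"obius bands are read off from the topological upper bounds on the multiplicity of $\sigma_1$ for planar surfaces of genus $0$ with two boundary components and for the M\"obius band respectively (bounds of Jammes--Karpukhin type of the form $2\gamma + k + 1$, with the appropriate interpretation in the non-orientable case). Since $\Phi$ is constructed from a basis of the first eigenspace, its codomain dimension is controlled by this multiplicity.
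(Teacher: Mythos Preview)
Your proposal is correct and matches the paper's approach exactly: the paper presents \cref{thm_harmonic} as an immediate consequence of \cref{thm_annuli} together with the existence result \cite[Theorem 2]{petrides-3} (stated here as \cref{thm_existence}), with the free boundary harmonic map arising from the conformal maximality, and the dimension bounds read off from multiplicity estimates. The only minor correction is the attribution for the multiplicity bounds: the paper cites \cite[Corollary 1.4.1]{KKP} (and \cite[Theorems 2.3--2.4]{fs}) rather than a Jammes-type formula, so you should reference those directly for the stated values $N\leq 3$ and $N\leq 4$.
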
 

The bounds on $N$ follow from \cite[Corollary 1.4.1]{KKP}, see also \cite[Theorem 2.3 and Theorem 2.4]{fs}.

We remark that it follows from our work \cite{MP} that for any compact $\Sigma$ with smooth non-empty boundary, that is not diffeomorphic to a disk, there is an entire family of specifically degenerating conformal classes for which the analogue of \cref{thm_annuli} holds, see \cref{thm_mp} below for the corresponding version for annuli.
In the case of annuli and M{\"o}bius bands the strong rigidity results by Fraser--Schoen allow us to propagate this to all conformal classes.

In a similar direction Karpukhin--Stern, also relying on \cite{GL}, have recently obtained versions of \cref{thm_annuli} and \cref{thm_harmonic} for a very different set of conformal classes, \cite{KS}.
They show the analogous result holds for some $\Sigma \subset (S,g)$, where $S$ is a closed surface and $\Sigma$ has many boundary components.

Our proof of \cref{thm_annuli} combines several ingredients.
The existence of maximizing metrics for the normalized first Steklov eigenvalue in a conformal class under the gap assumption \eqref{eq_gap} proved by the second author \cite{petrides-3}, the connection of extremal metrics for the Steklov problem and free boundary minimal surfaces,
a rigidity result by Fraser--Schoen for free-boundary minimal annuli and M{\"o}bius bands \cite{fs}, the asymptotic computation of the Steklov eigenvalues of $\overline{B}_1 \setminus B_\eps$ \cite{dittmar}, and the glueing construction from the recent work of the authors \cite{MP}.

We begin by recalling the connection between extremal metrics for Steklov eigenvalues and free-boundary minimal surfaces.
For simplicity we only state a simplified version sufficient for our purposes.

\begin{theorem}[{\cite[Proposition 5.2]{fs}}] \label{thm_connection}
Let $g$ be a smooth metric on $\Sigma$ such that
$$
\sigma_1(\Sigma,g) L_g(\partial \Sigma) \geq \sigma_1(\Sigma,h) L_h(\partial \Sigma)
$$
for any metric $h \in U$ where $U$ is an open neighborhood of $g$ in the $C^\infty$-topology.\footnote{In \cite{fs} it is assumed that $g$ is globally maximizing but the proof does not use this.}
Then there is a branched, conformal, minimal immersion $\Phi \colon (\Sigma,g) \to \mathbb{B}^N$ with free boundary by first eigenfunction for some $N \geq 2$.
\end{theorem}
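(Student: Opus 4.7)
The plan is to follow the classical paradigm relating extremal eigenvalue problems to free-boundary minimal surfaces, translating the first-order condition at the local maximum into the Euler-Lagrange equations of a branched conformal minimal immersion with free boundary in a Euclidean ball.

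First I would derive the variational formula. For a symmetric $2$-tensor $h$ on $\Sigma$ and the family $g_t = g+th$, a Hadamard-type computation (integration by parts plus the $L^2(\partial\Sigma,g_t)$-normalization constraint) gives, when $\sigma_1(g)$ is simple with normalized first eigenfunction $u$,
\[
\frac{d}{dt}\bigg|_{t=0}\sigma_1(g_t) L_{g_t}(\partial\Sigma) = \mathcal{F}_u(h),
\]
where $\mathcal{F}_u$ is a linear functional whose bulk piece has the form $\la du\otimes du-\tfrac12|du|^2 g, h\ra$ and whose boundary piece involves $u^2$ against the tangential trace of $h$. When $\sigma_1$ has multiplicity $m\geq 2$, Kato-type perturbation theory produces only one-sided derivatives, equal to the max, respectively min, of $\mathcal{F}_u(h)$ as $u$ ranges over normalized eigenfunctions in the first eigenspace.

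Next, the local-maximum hypothesis forces both one-sided derivatives to be $\leq 0$ for every $h$ and its negative. A convex-analysis argument (Hahn--Banach separation applied to the convex hull of $\{\mathcal{F}_u\}$ as $u$ varies over the unit sphere of the first eigenspace, combined with Carath\'eodory's theorem on this finite-dimensional set) produces first eigenfunctions $u_1,\dots,u_N$ and weights $c_1,\dots,c_N\geq 0$, not all zero, such that $\sum_i c_i\mathcal{F}_{u_i}(h)=0$ for every symmetric $2$-tensor $h$. Splitting $h$ into interior and boundary parts separates this into
\[
\sum_i c_i\bigl(du_i\otimes du_i-\tfrac12|du_i|^2 g\bigr)=0 \text{ on }\Sigma, \qquad \sum_i c_i u_i^2=\text{const on }\partial\Sigma.
\]
Setting $\Phi=(\sqrt{c_1}\,u_1,\dots,\sqrt{c_N}\,u_N)$, the bulk identity says $\Phi^* g_{\IR^N}$ is conformal to $g$; combined with harmonicity of each $u_i$, this makes $\Phi$ a branched conformal minimal immersion, with isolated branch points where $d\Phi=0$. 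After normalization, the boundary identity reads $|\Phi|\equiv 1$ on $\partial\Sigma$, so $\Phi(\partial\Sigma)\subset\partial\mathbb{B}^N$, and the Steklov relation $\partial_\nu u_i=\sigma_1 u_i$ upgrades this to $\partial_\nu\Phi=\sigma_1\Phi$ along $\partial\Sigma$, which is precisely the free-boundary condition. Finally $N\geq 2$ is forced by the bulk identity, as $N=1$ would require $du_1\otimes du_1=\tfrac12|du_1|^2 g$, impossible on a surface unless $du_1\equiv 0$.

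The main obstacle is the multiplicity case in the second step: when $\sigma_1$ is multiple, the first variation is genuinely one-sided, and extracting the critical combination $(c_i,u_i)$ demands the separation/min-max argument rather than a direct identity. The \emph{a posteriori} conclusion $N\geq 2$ confirms this step is unavoidable, since no metric with simple $\sigma_1$ can be a local maximum of the normalized eigenvalue functional.
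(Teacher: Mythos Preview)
The paper does not supply its own proof of this statement: it is quoted verbatim as \cite[Proposition 5.2]{fs}, with only the parenthetical observation (in the footnote) that Fraser--Schoen's argument, written for global maximizers, goes through unchanged for a local maximum. There is therefore nothing in the paper to compare your proposal against.

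That said, your outline is a faithful sketch of the Fraser--Schoen proof and is essentially correct. Two small points of precision. First, the one-sided derivative of $\sigma_1$ in the direction $h$ is $\min_u \mathcal{F}_u(h)$ (not the max), since $\sigma_1$ is the \emph{lowest} positive eigenvalue and under perturbation picks out the bottom branch; the local-maximum condition then reads $\min_u \mathcal{F}_u(h)\le 0$ for every $h$, and replacing $h$ by $-h$ gives $\max_u \mathcal{F}_u(h)\ge 0$, so that $0$ lies in the range of $u\mapsto \mathcal{F}_u(h)$ for each fixed $h$. Second, the separation step is usually phrased as: the convex hull (in the space of quadratic forms on the eigenspace, paired with symmetric $2$-tensors) of the forms $u\mapsto \mathcal{F}_u(\cdot)$ contains $0$; otherwise a separating hyperplane would produce an $h$ with $\mathcal{F}_u(h)>0$ for all $u$, contradicting the previous line. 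Carath\'eodory then bounds the number of eigenfunctions needed. With these clarifications your plan matches the argument in \cite{fs}.
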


Next, we state the rigidity results by Fraser--Schoen.

\begin{theorem}[{\cite[Theorem 1.2 and Theorem 1.4]{fs}}] \label{thm_fs}
Let $\Phi \colon \Sigma \to B^N$ be a minimal, free boundary immersion by first Steklov eigenfunctions.
If $\Sigma$ is an annulus, then $\Sigma$ is homothetic to the critical catenoid.
If $\Sigma$ is a M{\"o}bius band, then $\Sigma$ is homothetic to the criticial M{\"o}bius band.
\end{theorem}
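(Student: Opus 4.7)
The plan is to reduce the problem to an explicit classification on a conformal model of the surface. I would first parametrize the annulus conformally as a flat cylinder $C_T = [-T,T] \times S^1$ for some $T > 0$, and the M\"obius band as its quotient by the orientation-reversing involution $\iota(x,\theta) = (-x, \theta + \pi)$. By two-dimensional conformal invariance of the Laplacian, each coordinate function $\Phi^i$ is harmonic on $C_T$ with respect to the flat metric $g_0 = dx^2 + d\theta^2$. Normalizing $\sigma_1 = 1$ via a homothety of $\mathbb{B}^N$, the combination of the free-boundary condition with the first-eigenfunction equation yields the Robin-type boundary condition $\pm \partial_x \Phi^i = \lambda \Phi^i$ on $\{\pm T\} \times S^1$, where $\lambda = |\Phi_x|_{g_0} = |\Phi_\theta|_{g_0}$ is the conformal factor of $\Phi^\ast \xi$ relative to $g_0$. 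Conformality of $\Phi$ further gives $|\Phi_x|^2 = |\Phi_\theta|^2$ and $\la \Phi_x, \Phi_\theta \ra = 0$.

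The crux is to establish an axial symmetry of $\Phi$. The coordinate functions span a subspace of the first Steklov eigenspace of $(\Sigma, \Phi^\ast \xi)$, and ambient orthogonal transformations of $\mathbb{R}^N$ preserving $\Phi(\Sigma)$ induce isometries of that eigenspace. Using the dimension bounds on the first eigenspace cited just after \cref{thm_harmonic}, combined with a moving-plane/Alexandrov argument applied to differences of $\Phi$ with its reflections in equatorial hyperplanes of $\mathbb{B}^N$, I would upgrade this to a genuine axial symmetry: after an orthogonal change of coordinates in $\mathbb{R}^N$, $\Phi$ becomes invariant under rotation in $\theta$, with the invariance twisted by the $\iota$-induced reflection of $\mathbb{R}^N$ in the M\"obius case.

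With axial symmetry in hand, one writes $\Phi(x,\theta) = (f(x)\cos\theta,\, f(x)\sin\theta,\, h(x),\, 0,\dots,0)$ after a further orthogonal rotation. The harmonicity, minimality, and conformality equations reduce to the ODE system $f'' = f$, $h'' = 0$, $f'^2 + h'^2 = f^2$, whose general solution is $f(x) = A\cosh x + B\sinh x$ and $h(x) = ax+b$. Imposing $|\Phi|=1$ on $\partial C_T$ together with the Robin condition above then forces $B = b = 0$ and the transcendental equation $T\tanh T = 1$, which is exactly the critical catenoid up to a homothety. For the M\"obius band, the $\iota$-equivariance selects instead the Fraser--Schoen critical M\"obius band parametrization.

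The main obstacle is the axial-symmetry step. The boundary weight $\lambda$ in the Robin condition depends on $\Phi$ itself and is not a priori amenable to separation of variables on $C_T$; the standard Fourier basis of harmonics on the cylinder is not a basis of eigenfunctions of the induced Dirichlet-to-Neumann operator, so the reduction has to come from a global geometric argument exploiting the extremality hidden in the first-eigenfunction property. Once symmetry is secured, the remaining steps are a routine ODE classification and do not discriminate between the annulus and M\"obius cases beyond the discrete equivariance data.
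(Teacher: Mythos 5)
This theorem is not proved in the paper; it is imported verbatim as \cite[Theorems 1.2 and 1.4]{fs}, so there is no in-paper argument to compare against. I will therefore assess your sketch against the actual Fraser--Schoen proof.

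Your setup (conformal flat cylinder, harmonic coordinate functions, Robin-type boundary condition with conformal factor, conformality relations) and your endgame (solving $f''=f$, $h''=0$, $f'^2+h'^2=f^2$ and matching the free-boundary and sphere conditions to get $T\tanh T=1$) are both correct and standard, and you rightly identify the axial-symmetry step as the crux. The problem is that this step is essentially the entire theorem, and the moving-plane/Alexandrov mechanism you propose is not Fraser--Schoen's argument and has no obvious way to get started here: there is no a priori plane of symmetry to reflect across, the surface is not a graph, and the boundary weight $\lambda$ is itself unknown, so the usual Hopf-lemma comparison between $\Phi$ and its reflection does not close. Moving-plane methods in the free-boundary setting typically need additional structure (graphicality, stability, or an ambient symmetry to exploit), none of which is available.

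What Fraser--Schoen actually do is quite different in spirit. For the annulus: (i) a Steklov multiplicity bound (later sharpened in \cite{KKP}) gives $\dim E_1 \leq 3$; (ii) the coordinate functions span a subspace of $E_1$ of dimension $\geq 3$ (the image cannot be planar), forcing $N=3$ and $E_1$ to be exactly the span of $x_1,x_2,x_3$; (iii) a delicate argument combining the nodal structure of the boundary restrictions of the coordinate functions, the action of $O(3)$ on the eigenspace, and an analysis of the normal components of the ambient rotation Killing fields then yields $S^1$-invariance of the surface. Only after this does the ODE reduction enter. The Möbius case is parallel but works on the orientable double cover with the additional equivariance constraint. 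So your plan substitutes the genuinely hard ingredient — the extremality/multiplicity/nodal analysis — with a reflection argument that does not obviously apply, and the two downstream cases (annulus vs.\ Möbius) are not merely separated by "discrete equivariance data" but require the multiplicity bounds $N\leq 3$ and $N\leq 4$ respectively, which you invoke only in passing. As written, the proposal is a correct outer shell around an unproved core.

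Two smaller points. First, after the conformality constraint $a^2 = A^2 - B^2$, it is not automatic that $B=0$; you need to combine the free-boundary condition $f'/f = h'/h$ at $x = \pm T$ with $|\Phi|=1$ there, and a short argument (or a normalization by the residual $x\mapsto -x$ symmetry) is needed to kill $B$ and $b$. Second, the statement allows branched immersions via \cref{thm_connection}, and Fraser--Schoen also have to rule out branch points; your sketch silently assumes an immersion throughout.
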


We have the following existence and compactness result by the second named author.

\begin{theorem}[{\cite[Theorem 2]{petrides-3}}] \label{thm_existence}
Let $(\Sigma$,g) be a compact surface with non-empty boundary such that
$$
\sup_{\omega} (\sigma_1(\Sigma,e^{2 \omega} g)) L_{e^{2 \omega}g}(\partial \Sigma) > 2 \pi,
$$
then there is a smooth function $\tau \colon \Sigma \to \IR$ such that
\begin{equation} \label{eq_max}
\sigma_1(\Sigma,e^{2 \tau} g)) L_{e^{2 \tau}g}(\partial \Sigma)
=
\sup_{\omega} (\sigma_1(\Sigma,e^{2 \omega} g)) L_{e^{2 \omega}g}(\partial \Sigma)).
\end{equation}
Moreover, for any sequence of $(g_k)_{k \in \IN}$ of metrics with
\begin{equation} \label{eq_no_bubbling}
\liminf_{k \to \infty}  \sup_{\omega} (\sigma_1(\Sigma,e^{2 \omega} g_k)) L_{e^{2 \omega}g_k}(\partial \Sigma) > 2 \pi,
\end{equation}
the sequence $(\tau_k)_{k \in \IN}$ as in \eqref{eq_max} is smoothly precompact.
\end{theorem}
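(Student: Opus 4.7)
The strategy is a concentration-compactness analysis for the conformally normalized first Steklov eigenvalue, exploiting that the threshold $2\pi$ equals $\sigma_1 L$ on the standard disk and is sharp by Weinstock's inequality. The existence and compactness assertions follow from the same analysis applied to a single maximizing sequence and to a doubly-indexed family, respectively, so I focus on existence of $\tau$.

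First, by the conformal invariance of the Dirichlet energy in dimension two,
$$
\sigma_1(\Sigma, e^{2\omega}g) = \inf_{u} \frac{\int_\Sigma |\nabla u|_g^2 \, dv_g}{\int_{\partial \Sigma} u^2 \, d\mu_\omega}, \qquad d\mu_\omega := e^\omega \, dl_g,
$$
the infimum being over $u \in H^1(\Sigma)$ with $\int_{\partial\Sigma} u \, d\mu_\omega = 0$. Thus $\sigma_1(\Sigma, e^{2\omega}g) L_{e^{2\omega}g}(\partial \Sigma)$ depends on $\omega$ only through the probability measure $\mu_\omega / \mu_\omega(\partial \Sigma)$ on $\partial \Sigma$. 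Taking a maximizing sequence $\omega_n$ normalized to $\mu_{\omega_n}(\partial \Sigma) = 1$, I would extract a weak-$*$ limit $\mu_{\omega_n} \rightharpoonup \mu$ decomposed as $\mu = f\, dl_g + \sum_i \alpha_i \delta_{p_i} + \mu_{\mathrm{cont}}$.

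The heart of the argument is a boundary bubble-extraction lemma showing that each atomic mass $\alpha_i > 0$ at $p_i \in \partial \Sigma$ contributes at most $2\pi \alpha_i$ to $\limsup \sigma_1 L$: at a suitable scale $r_n \to 0$, the rescaled normalized eigenfunctions at $p_i$ converge to a nontrivial harmonic function on the upper half-plane which, after conformal compactification to the disk, realizes a Steklov test function on $\mathbb{D}^2$ that is controlled by $2\pi$ through Weinstock's inequality. A Besicovitch-type covering argument excludes a singular continuous part, so only finitely many atoms can appear. Combining bubbles and the smooth part yields
$$
\limsup_n \sigma_1(\Sigma, e^{2\omega_n}g) L_{e^{2\omega_n}g}(\partial \Sigma) \leq \max\Bigl(2\pi,\; \sigma_1(\Sigma, f^2 g) L_{f^2 g}(\partial \Sigma)\Bigr),
$$
and the gap hypothesis $\sup > 2\pi$ forces all mass into the smooth part. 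The limiting $\tau$ with $e^\tau = f$ is the sought maximizer; its regularity follows from the Euler--Lagrange characterization of critical metrics (first eigenfunctions $u_1, \ldots, u_N$ satisfying $\sum_j u_j^2 \equiv \mathrm{const}$ on $\partial \Sigma$) together with a standard elliptic bootstrap. For the compactness statement, the same analysis is run uniformly in $k$ on the sequence $(\tau_k)$; the assumption \eqref{eq_no_bubbling} is precisely what excludes bubble formation in the limit, so $(\tau_k)$ is smoothly precompact.

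\textbf{Main obstacle.} The technical crux is the boundary bubble quantization: one must identify a well-controlled blow-up scale $r_n \to 0$ at each $p_i$, propagate sufficient mass of the normalized eigenfunction to the rescaled picture to obtain a nontrivial harmonic limit, and then convert the asymptotics into a clean Weinstock-type upper bound on the disk, carefully accounting for the boundary contributions outside of the bubble and for the interaction between several simultaneous bubbles. Excluding the singular continuous part of $\mu$ and ruling out infinite-atom scenarios are equally delicate; the smoothness bootstrap, once the limit measure has smooth density, is comparatively standard.
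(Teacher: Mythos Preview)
The paper does not actually prove this theorem: it is quoted from \cite{petrides-3}, and the only argument supplied in the present paper is the short remark after the statement explaining why the compactness clause (which is not stated verbatim in \cite{petrides-3}) follows. Your sketch is a faithful outline of the concentration--compactness strategy that underlies \cite{petrides-3}: pass to probability measures on $\partial\Sigma$, extract a weak-$*$ limit, identify atoms with disk bubbles bounded by $2\pi$ via Weinstock, and conclude that the gap hypothesis forces all mass into a smooth density.

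For the compactness assertion, however, the paper's remark points to a somewhat different route than the one you propose. You suggest rerunning the measure-theoretic bubble analysis uniformly in $k$; the paper instead invokes the characterization of the maximizers $e^{2\tau_k}g_k$ in terms of free boundary harmonic maps $\Phi_k\colon(\Sigma,g_k)\to\mathbb{B}^{N_k}$ by first eigenfunctions, and then appeals to the bubbling theory for such harmonic maps developed in \cite{petrides-3}. The point is that under \eqref{eq_no_bubbling} these harmonic maps cannot bubble (a disk bubble would force the limit of the conformal supremum down to $2\pi$), and smooth compactness of the $\Phi_k$ then yields smooth compactness of the induced boundary densities, hence of the $\tau_k$. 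This buys smooth precompactness essentially for free once the harmonic-map compactness is in hand, whereas your direct measure-theoretic argument would still owe a separate smoothness upgrade along the sequence. Both approaches are correct in spirit; the paper simply defers the analytic work to \cite{petrides-3} rather than reproducing it.
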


We remark that the last item is not explicitly stated in \cite[Theorem 2]{petrides-3}.
Instead it easily follows from the characterization of these maximizing metrics in terms of free boundary harmonic maps.
Along a sequence enjoying \eqref{eq_no_bubbling} there can not be any bubbling of these harmonic maps.
This is handled in \cite{petrides-3} even under much weaker assumptions.

We also have the following comparison result for Steklov eigenvalues.

\begin{theorem}[{\cite{dittmar}, see also \cite[Example 4.2.5.]{gp}}] \label{thm_comp_flat}
For $\eps>0$ sufficiently small, we have 
$$
\sigma_1(\overline{B_1} \setminus B_\eps) L(\partial (\overline{B_1} \setminus B_\eps)) > 2 \pi.
$$
\end{theorem}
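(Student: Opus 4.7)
The plan is to compute the Steklov spectrum of the flat annulus $A_\varepsilon := \overline{B_1} \setminus B_\varepsilon$ explicitly by separation of variables in polar coordinates $(r,\theta)$, and then to extract the first positive eigenvalue by an asymptotic expansion in $\varepsilon$. Every harmonic function on $A_\varepsilon$ admits a Fourier expansion
\[
u(r,\theta) = a_0 + b_0 \log r + \sum_{k \geq 1}\bigl((a_k r^k + b_k r^{-k})\cos(k\theta) + (c_k r^k + d_k r^{-k})\sin(k\theta)\bigr),
\]
and since the outward normals on $\partial A_\varepsilon$ are $+\partial_r$ on $\{r=1\}$ and $-\partial_r$ on $\{r=\varepsilon\}$, the Steklov condition $\partial_\nu u = \sigma u$ decouples across these modes into finite-dimensional linear systems.

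In mode $k = 0$ one finds the eigenvalue $\sigma = 0$ together with $\sigma = (\varepsilon^{-1}+1)/|\log \varepsilon|$, which diverges as $\varepsilon \to 0$. For each $k \geq 1$ (and each parity) a $2\times 2$ determinant calculation reduces the Steklov condition to
\[
\varepsilon(1-\varepsilon^{2k}) \sigma^2 - k(1+\varepsilon)(1+\varepsilon^{2k}) \sigma + k^2(1-\varepsilon^{2k}) = 0.
\]
Dropping the $\varepsilon^{2k}$ terms leaves the simplified quadratic $\varepsilon \sigma^2 - k(1+\varepsilon)\sigma + k^2 = 0$, whose discriminant is the perfect square $k^2(1-\varepsilon)^2$. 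The two resulting roots are $\sigma_{k,-} = k + O(\varepsilon^2)$ and $\sigma_{k,+} = k/\varepsilon + O(1)$; the smaller root $\sigma_{k,-}$ is monotone increasing in $k$ at leading order, so the first positive Steklov eigenvalue of $A_\varepsilon$ for small $\varepsilon$ comes from mode $k = 1$.

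I then refine the expansion at $k = 1$. A Taylor expansion of $\sqrt{(1+\varepsilon^2)^2 - 4\varepsilon(1-\varepsilon)^2}$ and subsequent division yields
\[
\sigma_1(A_\varepsilon) = \frac{(1+\varepsilon^2) - \sqrt{(1+\varepsilon^2)^2 - 4\varepsilon(1-\varepsilon)^2}}{2\varepsilon(1-\varepsilon)} = 1 + O(\varepsilon^2).
\]
Combined with $L(\partial A_\varepsilon) = 2\pi(1+\varepsilon)$, this gives
\[
\sigma_1(A_\varepsilon)\, L(\partial A_\varepsilon) = 2\pi + 2\pi\varepsilon + O(\varepsilon^2),
\]
which is strictly greater than $2\pi$ for all sufficiently small $\varepsilon > 0$, establishing the claim.

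The only step requiring modest care is verifying that $\sigma_{1,-}$ is truly the smallest positive Steklov eigenvalue of $A_\varepsilon$ as $\varepsilon \to 0$, and not superseded by some root coming from mode $0$ or from some $k \geq 2$. This is immediate from the leading behaviors $\sigma_0 \sim (\varepsilon|\log\varepsilon|)^{-1}$, $\sigma_{k,-} = k + o(1)$, and $\sigma_{k,+} \sim k/\varepsilon$: all of these exceed $1$ for sufficiently small $\varepsilon$, so no other eigenvalue competes with $\sigma_{1,-}$. Everything else is a routine explicit calculation.
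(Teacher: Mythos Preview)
The paper does not supply its own proof of this theorem; it is quoted as a known result from \cite{dittmar} and \cite[Example~4.2.5]{gp}. Your argument by separation of variables in polar coordinates is precisely the standard computation in those references, and the sketch the paper gives for the M{\"o}bius band analogue (reducing to explicit mixed Steklov--Dirichlet/Neumann problems on $\overline{B}_1\setminus B_\varepsilon$) confirms that this is also the approach the authors have in mind. Your quadratic
\[
\varepsilon(1-\varepsilon^{2k})\sigma^{2}-k(1+\varepsilon)(1+\varepsilon^{2k})\sigma+k^{2}(1-\varepsilon^{2k})=0
\]
is correct, and expanding for $k=1$ gives $\sigma_1(A_\varepsilon)=1-2\varepsilon^{2}+O(\varepsilon^{3})$, hence $\sigma_1 L=2\pi(1+\varepsilon)+O(\varepsilon^{2})>2\pi$ for small $\varepsilon$, as you claim. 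The only point that deserves an extra line is uniformity in $k$ when asserting $\sigma_{k,-}>\sigma_{1,-}$ for all $k\geq 2$; this follows e.g.\ from the elementary bound $\sigma_{k,-}\geq c/b=k(1-\varepsilon^{2k})\big/\big((1+\varepsilon)(1+\varepsilon^{2k})\big)$, which exceeds $1$ for all $k\geq 2$ once $\varepsilon$ is small.
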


We need the analogous result for M{\"o}bius bands.
Let $M_\eps$ the M{\"o}bius band obtained as follows.
We glue together two copies $A_1$ and $A_2$ of $\overline{B}_1 \setminus B_\eps$ along $\partial B_\eps$ and identify points by the involution given by
$\iota (x_1)=-x_2$, where $x_1 \in A_1$ and $x_2$ denotes the point with the same coordinates as $x_1$ but in $A_2$.
Note that the metric on $M_\eps$ is only Lipschitz, but it can easily be approximated by a sequence of smooth metrics such that the length of the boundary and the first Steklov eigenvalue converge.

\begin{prop}
We have that
$$
\sigma_1(M_\eps) L(\partial M_\eps) > 2\pi
$$
for $\eps>0$ sufficiently small.
\end{prop}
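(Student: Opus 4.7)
My plan is to compute $\sigma_1(M_\eps)$ explicitly by exploiting the $S^1$ rotational symmetry of $M_\eps$. Identifying $M_\eps$ with the flat annulus $\overline{B_1} \setminus B_\eps$ modulo the antipodal identification $p \sim -p$ on $\partial B_\eps$, I would decompose a Steklov eigenfunction into Fourier modes $u = \sum_n f_n(r) e^{in\theta}$. Since the identification corresponds to $\theta \mapsto \theta + \pi$, continuity of $u$ forces $f_n(\eps)(1 - (-1)^n) = 0$, so $f_n(\eps) = 0$ for odd $n$. An analogous matching of normal derivatives -- read off most cleanly from the orientable double cover $\tilde A$, where it becomes the standard transmission condition for weakly harmonic functions across the conical circle $\{s = 0\}$ with metric $ds^2 + (\eps + |s|)^2 d\theta^2$ -- yields $f_n'(\eps) = 0$ for even $n$.

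Each radial mode then satisfies a boundary value problem on $[\eps, 1]$: the Euler equation $(r f_n')' = n^2 f_n / r$, the Dirichlet or Neumann condition at $r = \eps$ just described, and the Steklov condition $f_n'(1) = \sigma_n f_n(1)$ at the outer boundary. The $n = 0$ sector produces only the zero eigenvalue, and the explicit solution for $n \geq 1$ yields
$$
\sigma_n = n\, \frac{1 + \eps^{2n}}{1 - \eps^{2n}} \quad (n \text{ odd}), \qquad \sigma_n = n\, \frac{1 - \eps^{2n}}{1 + \eps^{2n}} \quad (n \text{ even}).
$$

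For $\eps > 0$ sufficiently small a direct comparison shows that the minimum over $n \geq 1$ is attained at $n = 1$, giving $\sigma_1(M_\eps) = (1+\eps^2)/(1-\eps^2) > 1$. Since $L(\partial M_\eps) = 2\pi$ (the two outer circles of $\tilde A$ are identified by $\iota$ into a single circle of length $2\pi$), this gives $\sigma_1(M_\eps) L(\partial M_\eps) = 2\pi(1+\eps^2)/(1-\eps^2) > 2\pi$, as claimed. The principal technical point is justifying the transmission conditions at the image of $\partial B_\eps$, where the metric is only Lipschitz; this is straightforward from the weak formulation of the eigenvalue problem, or can be bypassed entirely via the smooth approximation noted immediately before the statement.
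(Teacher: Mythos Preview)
Your proof is correct and follows essentially the same route the paper sketches: separate variables by the rotational symmetry, reduce to the two mixed problems on $\overline{B}_1\setminus B_\eps$ with Steklov condition on $\partial B_1$ and Dirichlet (odd modes) or Neumann (even modes) on $\partial B_\eps$, and then read off that the first nonzero eigenvalue is $(1+\eps^2)/(1-\eps^2)$. The only difference is organizational: the paper first solves both mixed problems on the annulus and then selects the $\iota$-invariant combinations, whereas you impose $\iota$-invariance mode by mode via the transmission conditions on the double cover; the computations are identical, and you have simply filled in the details the paper leaves implicit.
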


The argument is analogous to (and in fact easier than) the proof of \cref{thm_comp_flat}.
By the symmetries of $M_\eps$ it suffices to find the eigenvalues of two mixed problems on $\overline{B}_1 \setminus B_\eps$.
These are the problems with Steklov 
conditions along $\partial B_1$ and Dirichlet or Neumann conditions along $\partial B_\eps$.
One then only has to check which linear cominations of eigenfunctions corresponding to the same eigenvalue are invariant under the involution $\iota$.

We also need that a similar results holds near the other end of the moduli space. 
This result is much more subtle and was only very recently obtained in much greater generality by the authors in \cite[Theorem 1.3]{MP}.
While this is not explicitly stated there it easily follows from the specific construction, which we briefly recall now.

Let $\mathbb{D}^2$ be the flat disk and 
$$
\Omega_\eps = \left\{ (x,y) \in \IR^2 : \eps \exp\left(- \frac{1}{\eps^\alpha} \right) \leq y \leq \eps, \ -\frac{y^2}{2} \leq x \leq \frac{y^2}{2} \right\}
$$
for some $\alpha \in (\frac{2}{5},\frac{1}{2})$.
We then obtain $\Sigma_\eps$ by glueing $\Omega_\eps$ along the two sides $\{y=\eps\}$ and $\left\{y = \eps \exp\left(- \frac{1}{\eps^\alpha} \right)\right\}$ to $\partial \mathbb{D}^2$ along disjoint intervals of the corresponding length in the boundary in conformal coordinates.
We denote by $g_\eps$ the canonically induced metric from this construction. 
Note that $\Sigma_\eps$ can be an annulus or a M{\"o}bius band depending on the orientations chosen in the glueing procedure.

\begin{theorem}[{\cite[Theorem 1.3]{MP}}] \label{thm_mp}
There are $\omega_\eps \colon \Sigma_\eps \to \IR$ such that
$$
\sigma_1(\Sigma_\eps, e^{2 \omega_\eps} g_\eps) L_{e^{2 \omega_\eps}g_\eps}(\partial \Sigma_\eps) > 2 \pi
$$
for $\eps$ sufficiently small.
\end{theorem}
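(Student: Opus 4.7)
The plan is to construct $\omega_\eps$ explicitly and verify the inequality via a lower bound on the Steklov Rayleigh quotient. The key geometric fact is that the strip $\Omega_\eps$ has extremely small conformal ``width'' compared to its ``height'': at height $y$ its cross-sectional width equals $y^2$, while the $y$-range is $[\eps \exp(-\eps^{-\alpha}), \eps]$. A direct one-dimensional computation, minimizing the Dirichlet energy of a function $u = u(y)$ with $u = 0$ at $y = \eps \exp(-\eps^{-\alpha})$ and $u = 1$ at $y = \eps$, yields a minimum of order $\eps \exp(-\eps^{-\alpha})$, which is exponentially small in $\eps^{-\alpha}$. In other words, any test function can be freely modified across $\Omega_\eps$ without appreciably affecting the Dirichlet energy.

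Guided by this, choose $\omega_\eps$ so that the induced boundary measure $d\mu_\eps := e^{\omega_\eps} ds_{g_\eps}$ puts almost all of its mass on $\partial \mathbb{D}^2$ and only a small, carefully calibrated amount on the two ``curved'' sides $\{x = \pm y^2/2\}$ of $\Omega_\eps$. The desired inequality then amounts to
\[
\int_{\Sigma_\eps} |\nabla u|^2 \, dv_{g_\eps} > \frac{2\pi}{L_{e^{2\omega_\eps} g_\eps}(\partial\Sigma_\eps)} \int_{\partial \Sigma_\eps} u^2 \, d\mu_\eps
\]
for every nonzero $u \in H^1(\Sigma_\eps)$ orthogonal to constants in $L^2(d\mu_\eps)$.

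For the disk part of the boundary, Weinstock's inequality gives the sharp bound $\int_{\mathbb{D}^2} |\nabla u|^2 \geq \int_{\partial \mathbb{D}^2} u^2 \, ds$ whenever $\int_{\partial \mathbb{D}^2} u \, ds = 0$; the discrepancy between this orthogonality and orthogonality against $d\mu_\eps$ is small and can be absorbed. For the strip part, a trace-Poincar\'e inequality on $\Omega_\eps$ controls $\int_{\{x = \pm y^2/2\}} u^2 \, d\mu_\eps$ in terms of $\int_{\Omega_\eps} |\nabla u|^2$ plus the boundary values of $u$ at the two short ends where $\Omega_\eps$ meets $\partial \mathbb{D}^2$. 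Combining these two bounds yields the required inequality up to controlled error terms.

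The strict gap (as opposed to equality) comes from a rigidity analysis of the equality case in Weinstock: if $u$ saturates Weinstock on $\mathbb{D}^2$ then $u|_{\mathbb{D}^2}$ is, up to an additive constant, a linear combination $a x_1 + b x_2$ of coordinate functions, which must take distinct boundary values on the two arcs of $\partial \mathbb{D}^2$ where $\Omega_\eps$ is glued. The resulting mismatch forces either a definite contribution to the strip Dirichlet energy or a definite contribution to $\int_{\{x=\pm y^2/2\}} u^2 \, d\mu_\eps$, either of which pushes the quotient strictly above $2\pi/L$. The main obstacle is making these improvements quantitatively usable: one needs the strict improvement coming from Weinstock rigidity to beat the perturbative errors produced by the presence of the strip. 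The window $\alpha \in (\tfrac{2}{5}, \tfrac{1}{2})$ is precisely the range in which the exponential taper is sharp enough for the strip's Dirichlet energy to remain negligible while still allowing the trace estimate on $\Omega_\eps$ and the matching of measures on $\partial \mathbb{D}^2$ to go through.
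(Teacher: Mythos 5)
The paper does not actually prove \cref{thm_mp}: it is attributed to \cite[Theorem~1.3]{MP}, and the text merely recalls the glueing construction and asserts that the statement ``easily follows from the specific construction'' in \cite{MP}. So your proposal is being measured against the argument in \cite{MP}, not against anything carried out in this paper. There, the result is obtained by a quantitative asymptotic analysis of the Steklov spectrum of the glued surface (test-function estimates together with lower bounds that track the competition between the added boundary length and the shift of the spectrum); the admissible range $\alpha \in (\tfrac{2}{5},\tfrac{1}{2})$ reflects exactly where several error terms in that analysis can be made to close.

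Your high-level picture is reasonable and captures the right geometric intuition: $\Omega_\eps$ has enormous conformal modulus (of order $\eps^{-1}e^{1/\eps^\alpha}$), so its Dirichlet energy contribution is exponentially negligible, and the goal is to let the extra boundary mass pay for this. But the step that would yield the \emph{strict} inequality is where the argument has a genuine gap. The near-equality case of Weinstock on the disk produces a two-dimensional family of near-optimizers $a x_1 + b x_2$ (modulo constants). Your rigidity argument hinges on such a function taking distinct values on the two glueing arcs, which forces a definite cost. That is true for generic $(a,b)$, but not for $(a,b)$ orthogonal to the chord joining the two glueing points: then the linear function takes (to leading order) equal values at both arcs, you can extend it across $\Omega_\eps$ by the common constant with essentially zero additional Dirichlet energy, and its Rayleigh quotient is $\approx 2\pi/L(\partial\mathbb{D}^2)$, with no improvement visible from rigidity. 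Since $\sigma_1$ is a minimum over all test functions, this case cannot be excluded, and whether the resulting quotient falls above or below $2\pi/L_{e^{2\omega_\eps}g_\eps}(\partial\Sigma_\eps)$ is precisely the delicate quantitative balance your sketch defers (``the main obstacle''). In addition, $\omega_\eps$ is never actually specified, and the assertion that the mismatch between orthogonality in $ds$ and in $d\mu_\eps$ ``can be absorbed'' is exactly the kind of estimate whose sign decides the theorem and cannot be waved through. As written, the proposal identifies the right ingredients but does not close the argument; the hard estimates are where the content of \cite{MP} lies.
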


We use the following simple lemma to control the conformal class of the surfaces $\Sigma_\eps$.

\begin{lemma} \label{lem_conformal}
Let $\phi \colon [0,r] \times [0,1] \to [0,R] \times [0,1]$ be conformal embedding, smooth in the interior, such that $\phi([0,r] \times \{i\}) \subset [0,R] \times \{i\}$, for $i=0,1$.
Then we have that $r \leq R$.
\end{lemma}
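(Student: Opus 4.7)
The plan is to run the classical length--area (extremal length) argument for conformal rectangles. The distinguished pair of opposite sides on both the domain and the target consists of the bottom and top edges, and the hypothesis $\phi([0,r]\times\{i\}) \subset [0,R]\times\{i\}$ says exactly that $\phi$ respects this pairing. The key observation is that for each fixed $x \in (0,r)$, the image curve $t \mapsto \phi(x,t)$ connects a point of height $0$ to a point of height $1$ in $[0,R]\times[0,1]$, so its Euclidean length is at least $1$; that is, $\int_0^1 |\partial_t \phi(x,t)|\, dt \geq 1$.

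Granted this, the proof reduces to a short Cauchy--Schwarz computation. Integrating the above pointwise lower bound over $x \in (0,r)$ and then applying Cauchy--Schwarz on $(0,r) \times (0,1)$ gives
$$
r^2 \;\leq\; \left(\int_0^r \!\!\int_0^1 |\partial_t \phi|\, dt\, dx\right)^{\!2} \;\leq\; r \int_0^r \!\!\int_0^1 |\partial_t \phi|^2\, dt\, dx.
$$
Because $D\phi$ is a conformal matrix in the interior, $|\partial_t\phi|^2 = |\det D\phi|$ pointwise, and since $\phi$ is an embedding into $[0,R]\times[0,1]$ the change of variables formula yields $\int_0^r\int_0^1 |\det D\phi|\, dt\, dx \leq R$. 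Combining the two inequalities, $r \leq R$.

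The argument is essentially routine and I do not expect a serious obstacle. The one point requiring a little care is that $\phi$ is only assumed smooth in the interior: to justify the pointwise length bound one first integrates $|\partial_t\phi|$ over $t \in [\delta, 1-\delta]$, bounds the result below by $|\phi(x,1-\delta) - \phi(x,\delta)|$ via the triangle inequality, and lets $\delta \to 0$ using continuity of the embedding up to the boundary together with the hypothesis that $\phi(x,0)$ lies at height $0$ and $\phi(x,1)$ at height $1$.
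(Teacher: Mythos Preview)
Your argument is correct and is essentially the same length--area computation as in the paper: writing the conformal factor as $e^{\omega}=|\partial_t\phi|$, the paper applies Jensen on each vertical slice to get $1\le\int_0^1 e^{2\omega}\,dy$ and then integrates in $x$, whereas you integrate first and apply Cauchy--Schwarz on the whole rectangle; both yield $r\le \operatorname{area}(\phi)\le R$. Your remark about handling the boundary via a $\delta$-truncation is exactly the care the paper alludes to when noting that the length bound ``remains valid also if $\phi(\{x\}\times[0,1])$ is not a rectifiable curve.''
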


\begin{proof}
Since $\phi$ is conformal, we have that 
$$
\phi^\star (dx^2+dy^2) = e^{2 \omega} (dx^2 + dy^2)
$$
for a function $\omega$ that is smooth in the interior.
By assumption, we have that
$$
1 
\leq \left( L (\phi(\{x\} \times [0,1])) \right)^2 
= \left(\int_0^1 e^{\omega(x,y)} dy \right)^2
\leq \int_0^1 e^{2 \omega(x,y)} dy,
$$
where we have used Jensen's inequality and note that this remains valid also if $\phi(\{x\} \times [0,1])$ is not a rectifiable curve.
This implies that
$$
r 
= \int_0^r  dx
\leq \int_0^r \int_0^1 e^{2 \omega(x,y)} dy dx
=\area(\phi) \leq R
\qedhere
$$
\end{proof}

We can now prove the following

\begin{lemma} \label{lem_conformal_2}
Assume that $\Sigma_\eps$ is an annulus and
let $\Phi \colon \overline{B}_1 \setminus B_r \to \Sigma_\eps$ be a conformal homeomorphsim, which is smooth in the interior.
Then we have that $r \to 1$ as $\eps \to 0$.
\end{lemma}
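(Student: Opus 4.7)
The plan is to show that the conformal modulus of $\Sigma_\eps$ tends to zero as $\eps \to 0$. Setting $T = \log(1/r)$, the annulus $\overline{B}_1 \setminus B_r$ has conformal modulus $T/(2\pi)$, and since $\Phi$ is a conformal homeomorphism, $r \to 1$ is equivalent to $T \to 0$. The idea is that $\Omega_\eps$ is conformally a very wide and short rectangle whose two long sides lie, in the annulus case, on the two different components of $\partial \Sigma_\eps$; I will compare this to the cut annulus via \cref{lem_conformal}.

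The first task is to conformally uniformize $\Omega_\eps$, viewed as a topological rectangle with corners at the intersections of the two parabolic sides $\{x = \pm y^2/2\}$ and the two horizontal segments $\{y = \eps\}$, $\{y = \eps \exp(-1/\eps^\alpha)\}$, to a Euclidean rectangle $[0, a_\eps] \times [0, b_\eps]$ in which the two parabolic sides correspond to $[0,a_\eps] \times \{0\}$ and $[0,a_\eps] \times \{b_\eps\}$. Then $b_\eps/a_\eps$ equals the extremal length of the family of curves in $\Omega_\eps$ joining the two parabolic sides, which by extremal-length duality on a topological rectangle equals the reciprocal of the extremal length of the family joining the two horizontal segments. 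To lower-bound the latter I would test with the conformal density $\rho(x,y) = 1/y$: any curve $\gamma$ in $\Omega_\eps$ connecting the two horizontal segments has $y$-coordinate traversing $[\eps \exp(-1/\eps^\alpha), \eps]$, so
\[
\int_\gamma \rho \, |dz| \geq \int_{\eps \exp(-1/\eps^\alpha)}^{\eps} \frac{dy}{y} = \eps^{-\alpha},
\qquad
\int_{\Omega_\eps} \rho^2 \, dx\,dy = \int_{\eps \exp(-1/\eps^\alpha)}^{\eps} dy \leq \eps,
\]
yielding extremal length at least $\eps^{-2\alpha - 1}$ and therefore $b_\eps/a_\eps \leq \eps^{2\alpha + 1}$.

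For the final step I use that $\Sigma_\eps$ is an annulus: the two parabolic sides of $\Omega_\eps$ then lie on different components of $\partial \Sigma_\eps$, and under $\Phi^{-1}$ map to arcs on the two distinct boundary circles of $\overline{B}_1 \setminus B_r$. The complementary piece $\Phi^{-1}(\mathbb{D}^2)$ is a topological disk whose boundary meets both $\partial B_1$ and $\partial B_r$, so I can find a simple arc $\gamma$ inside $\Phi^{-1}(\mathbb{D}^2)$ joining these two circles and disjoint from $\Phi^{-1}(\Omega_\eps)$. Cutting $\overline{B}_1 \setminus B_r$ along $\gamma$ gives a conformal rectangle of aspect ratio $2\pi : T$ into which $\Phi^{-1}(\Omega_\eps)$ embeds with the images of its parabolic sides on the two horizontal (boundary) sides. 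Composing with the uniformization above and rescaling both rectangles to unit height, \cref{lem_conformal} yields $a_\eps/b_\eps \leq 2\pi / T$, so
\[
T \leq 2\pi \cdot \frac{b_\eps}{a_\eps} \leq 2\pi\, \eps^{2\alpha + 1} \xrightarrow[\eps \to 0]{} 0,
\]
and $r = e^{-T} \to 1$. The main obstacle is the extremal-length lower bound, which hinges on recognizing $\rho = 1/y$ as the correct test density adapted to the funnel shape of $\Omega_\eps$; the cutting step is a matter of topological bookkeeping.
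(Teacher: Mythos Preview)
Your approach is correct and parallels the paper's: both reduce to showing that the conformal modulus of $\Omega_\eps$ (as a quadrilateral with the parabolic sides as one pair of opposite sides) tends to zero, and then feed this into a comparison with the modulus of $\Sigma_\eps$ via \cref{lem_conformal}. One point needs care: cutting the annulus along an \emph{arbitrary} arc $\gamma$ does not give a conformal quadrilateral of modulus exactly $2\pi/T$; the modulus depends on $\gamma$, and there is no reason a radial segment lies in $\Phi^{-1}(\mathbb D^2)$. What you actually need, and what holds, is the inequality $m\le 2\pi/T$ for the modulus $m$ of the cut quadrilateral. This follows from the area computation in the proof of \cref{lem_conformal}: lifting $\Phi^{-1}(\Omega_\eps)$ (or the whole cut annulus) into the strip $\mathbb R\times[0,1]$ gives a conformal embedding whose image has area at most the area of the quotient annulus, namely $2\pi/T$, and the Jensen step in that proof yields $a_\eps/b_\eps\le 2\pi/T$ directly. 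The paper makes exactly this move, phrasing it as a lift to the universal cover $\mathbb R\times[0,1]$ with deck period $R=2\pi/T$ rather than as a cut.

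The main technical difference is in the first step. The paper includes $\Omega_\eps$ isometrically into its enclosing rectangle $[-\tfrac{\eps^2}{2},\tfrac{\eps^2}{2}]\times[\eps e^{-1/\eps^\alpha},\eps]$, rescales to unit height, and applies \cref{lem_conformal} to obtain $r_0\le C\eps$ (in your notation $b_\eps/a_\eps\le C\eps$). Your extremal-length test with $\rho=1/y$ exploits the funnel shape of $\Omega_\eps$ and yields the sharper bound $b_\eps/a_\eps\le \eps^{2\alpha+1}$. Either suffices, since only $b_\eps/a_\eps\to 0$ is required; your route has the virtue of being a direct modulus estimate, while the paper's keeps everything within repeated applications of the single \cref{lem_conformal}.
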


\begin{proof}
Let 
\begin{equation} \label{eq_incl}
\phi_0 \colon [0,r_0] \times [0,1] \to \Omega_\eps \subseteq \left[-\frac{\eps^2}{2},\frac{\eps^2}{2}\right] \times \left[\eps \exp\left( - \frac{1}{\eps^\alpha} \right), \eps\right] 
\vspace{0.2cm}
\end{equation}
be a conformal homeomorphism which is smooth in the interior and on the boundary maps vertices to vertices\footnote{Note that even though the M{\"o}bius group only acts simply three-transitively on the boundary, the additional parameter $r_0$ allows us to do this.}
such that 
we have that $\phi_0([0,r_0] \times \{0\}) \subset  \left[-\frac{\eps^2}{2}, +\frac{\eps^2}{2} \right] \times \left\{ \eps \exp\left( - \frac{1}{\eps^\alpha} \right) \right\} $
and
$\phi_0([0,r_0] \times \{1\}) \subset \left[-\frac{\eps^2}{2}, +\frac{\eps^2}{2} \right] \times \{ \eps\}$.
Also note that the inclusion of $\Omega_\eps$ into the rectangle in \eqref{eq_incl} is isometric, in particular conformal.
After scaling the codomain by $\eps- \eps \exp\left( - \frac{1}{\eps^\alpha} \right) \sim \eps$ we  
can apply \cref{lem_conformal} to $\phi_0$ and find that
\begin{equation} \label{eq_conf_1}
r_0 \leq C \eps
\end{equation}
for some fixed constant $C>0$.

We now conformally parametrize the universal covering $\widetilde \Sigma_\eps$ of $\Sigma_\eps$ by
$(-\infty,\infty) \times [0,1]$ with deck transformations generated by $(x,y) \mapsto (x+R,y)$ for $R>0$, which uniquely determines $R$.
Using $\phi_0$ we then find a conformal embedding
$$
\phi \colon [0,r_0^{-1}] \times [0,1] \to \Sigma_\eps 
$$
given explicitly by
$$
\phi(x,y)=\phi_0(r_0 y,r_0 x) \in \Omega_\eps \subseteq \Sigma_\eps.
$$
We then lift $\phi$ to a map
$$
\tilde \phi \colon [0,r_0^{-1}] \times [0,1] \to  [0,R] \times [0,1] \subset \widetilde \Sigma_\eps
$$
with
$
\tilde \phi([0,r_0^{-1}] \times \{i\}) \subset  [0,R] \times \{i\}.
$
\cref{lem_conformal} applied to $\tilde \phi$ gives that 
$$
r_0^{-1} \leq R,
$$
from which the claim easily follows thanks to \eqref{eq_conf_1}.
\end{proof}

The very same argument applied to the orientation covering also gives the corresponding result for M{\"o}bius bands.

\begin{cor}
Assume that $\Sigma_\eps$ is a M{\"o}bius band and let $\hat \Sigma_\eps$ be the orientation covering of $\Sigma_\eps$.
Let $\Phi \colon \overline{B}_1 \setminus B_r \to \hat \Sigma_\eps$ be a conformal homeomorphsim, which is smooth in the interior.
Then we have that $r \to 1$ as $\eps \to 0$.
\end{cor}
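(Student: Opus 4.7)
The plan is to follow the proof of \cref{lem_conformal_2} verbatim, working in the orientation double cover $\hat{\Sigma}_\eps$ in place of $\Sigma_\eps$. Since $\Sigma_\eps$ is a M\"obius band, $\hat{\Sigma}_\eps$ is an annulus, so its universal cover admits a conformal parametrization by $(-\infty,\infty) \times [0,1]$ with deck transformations generated by $(x,y) \mapsto (x+R,y)$ for a unique $R > 0$. Under the given conformal homeomorphism $\hat{\Sigma}_\eps \cong \overline{B}_1 \setminus B_r$, the parameter $R$ depends monotonically on $r$ with $R \to \infty$ if and only if $r \to 1$.

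The only ingredient that requires attention is the lift of $\Omega_\eps$ to $\hat{\Sigma}_\eps$: since $\Omega_\eps$ is orientable and simply connected, its preimage under the covering map $\hat{\Sigma}_\eps \to \Sigma_\eps$ consists of two disjoint components, each mapped isometrically onto $\Omega_\eps$, with the two slant (parabolic) sides of each lift contained in $\partial \hat{\Sigma}_\eps$. Fix one such lift and still denote it by $\Omega_\eps$.

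With this in place, the two-step argument of \cref{lem_conformal_2} applies line by line. First, a conformal parametrization $\phi_0 \colon [0, r_0] \times [0, 1] \to \Omega_\eps \subset \hat{\Sigma}_\eps$ sending vertices to vertices, combined with \cref{lem_conformal} applied after rescaling the codomain by a factor of order $\eps$, yields $r_0 \leq C \eps$. Second, the lift $\tilde{\phi}$ of $\phi(x,y) = \phi_0(r_0 y, r_0 x)$ to a map $[0, r_0^{-1}] \times [0, 1] \to [0, R] \times [0, 1]$ in the universal cover sends horizontal sides to horizontal sides (these being lifts of the parabolic arcs in $\partial \hat{\Sigma}_\eps$), so \cref{lem_conformal} gives $r_0^{-1} \leq R$. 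Combining the two estimates yields $R \geq r_0^{-1} \to \infty$, hence $r \to 1$ as $\eps \to 0$.

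The only potential obstacle is the verification of the isometric lift of $\Omega_\eps$ described above; this is immediate from its orientability and simple-connectedness, and every other step is literally that of the proof of \cref{lem_conformal_2}.
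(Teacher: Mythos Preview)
Your proposal is correct and follows exactly the approach the paper indicates: the paper's entire proof is the single sentence ``The very same argument applied to the orientation covering also gives the corresponding result for M{\"o}bius bands,'' and you have simply spelled out what this means. Your observation that $\Omega_\eps$, being simply connected, lifts isometrically to $\hat\Sigma_\eps$ is the one point requiring a word of justification, and you supply it; everything else is indeed the proof of \cref{lem_conformal_2} verbatim.
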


\begin{proof}[Proof of \cref{thm_annuli}]
We give the proof for $\Sigma$ an annulus, the case of M{\"o}bius bands is completely analogous.

We write $A_r = \overline{B_1} \setminus B_r \subset \IR^2$ and note that any compact annulus is conformal to some $A_r$ for a unique $r \in (0,1)$.
Consider the functional 
$\overline{\sigma}_1 \colon (0,1) \to (0,\infty)$ given by
$$
\overline{\sigma}_1(r) = \sup_\omega \sigma_1 \left(A_r,e^{2\omega} \xi \right) L_{e^{2 \omega} \xi}(\partial A_r),
$$
where $\xi$ denotes the canonical flat metric on $A_r$.
We want to show that $\sigma_1(r)>2\pi$ for any $r \in (0,1)$.

For $r$ sufficiently small, we have thanks to \cref{thm_comp_flat} that
\begin{equation} \label{eq_strict_1}
\overline{\sigma}_1(r) > 2\pi.
\end{equation}
Similarly, we have from \cref{thm_mp} and \cref{lem_conformal_2} that
\begin{equation}  \label{eq_strict_2}
\overline{\sigma}_1(r) > 2\pi
\end{equation}
for $r$ sufficiently close to $1$.
Moreover, we also know that
\begin{equation} \label{eq_limit}
\lim_{r \to 0} \overline{\sigma}_1(r) = \lim_{r \to 1} \overline{\sigma}_1(r) = 2\pi,
\end{equation}
see \cite[Proposition 4.4]{fs}.

Let $r_\star \in (0,1)$ be chosen such that $A_{r_\star}$ is conformal to the critical catenoid.
Suppose now towards a contradiction that there is some $s \in (0,1)$ such that $\overline{\sigma}_1(s) \leq 2 \pi$.
We assume that $s > r_\star $.
The argument for the other case is identical using \eqref{eq_strict_1} instead of \eqref{eq_strict_2}.
Then, thanks to \eqref{eq_strict_2} and \eqref{eq_limit}, we claim there has to be $t \in (s,1)$ such that
$$
\overline{\sigma}_1(t) = \max_{r \in [s,1)} \overline{\sigma}_1(r) > 2 \pi.
$$
Indeed, this follows immediately from the observation that $\overline{\sigma}_1$ when restricted to any of the open sets $\{ r \in (0,1) : \sigma_1(r) > 2\pi +\delta\}$ with $\delta>0$ is continuous thanks to \cref{thm_existence}.
Moreover, also thanks to \cref{thm_existence} we have $\omega \colon A_t \to \IR$ such that 
$$
\sigma_1(A_t,e^{2 \omega} \xi )L_{e^{2 \omega} \xi }(\partial A_t)
\geq 
\sigma_1(A_t,e^{2 \tau} \xi )L_{e^{2 \tau} \xi }(\partial A_t)
$$ 
for any smooth function $\tau \colon A_t \to \IR$. 
It then follows immediately that $e^{2 \omega} \xi$ is a local maximum of the normalized first Steklov eigenvalue and hence induced by a branched, free-boundary minimal immersion into $\mathbb{B}^N$ by first eigenfunctions thanks to \cref{thm_connection}.
But by the uniqueness of the critical catenoid, see \cref{thm_fs} above, this is impossible for $t \neq r_\star$.
\end{proof}

\bibliographystyle{alpha}
\bibliography{mybibfile}

\begin{thebibliography}{GWZ}

\bibitem[Dit04]{dittmar}
B.~Dittmar, {\em Sums of reciprocal {S}tekloff eigenvalues}. Math.~Nachr.,{\bf 268}, 2004, 44--49.
\bibitem[FS16]{fs}
A.~Fraser, R.~Schoen, {\em Sharp eigenvalue bounds and minimal surfaces in the ball}, Invent.~Math. {\bf 203}, 2016, 823--890.
\bibitem[GL20]{GL}
A.~Girouard, J.~Lagac\'e, {\em Large Steklov eigenvalues via homogenisation on manifolds},
arXiv preprint 2020, arXiv:2004.04044, 30pp.
\bibitem[GP17]{gp}
A.~Girouard, I.~Polterovich, { \em Spectral geometry of the Steklov problem}, J.~Spectr.~Theory {\bf 2}, 2017, 321--359.
\bibitem[KKP14]{KKP}
M.A. Karpukhin, G. Kokarev, I. Polterovich, {\em Multiplicity bounds for {S}teklov eigenvalues on {R}iemannian surfaces}, Ann. Inst. Fourier (Grenoble), Universit\'e de Grenoble. Annales de l'Institut Fourier, {\bf 64}, 2014, 2481--2502.
\bibitem[KS20]{KS}
M.~Karpukhin, D.~L. Stern {\em Min-max harmonic maps and a new characterization of conformal eigenvalues},
arXiv preprint 2020, arXiv:2004.04086, 59pp.
\bibitem[MP20]{MP}
H.~Matthiesen, R.~Petrides,
{\em Free boundary surfaces of any topological type in {E}uclidean balls via shape optimization}, arXiv preprints 2020, arXiv:2004.06051v2, 39pp.
\bibitem[Pet14]{petrides}
R.~Petrides, {\em Existence and regularity of maximal metrics for the first Laplace eigenvalue on surfaces}, Geom.~Funct.~Anal.\ {\bf24}, 2014, 1336--1376.
%
\bibitem[Pet15]{petrides-2}
R.~Petrides,
{ \em On a rigidity result for the first conformal eigenvalue of the Laplacian} 
J.~Spectr.~Theory {\bf 5} (2015), 227--234. 
%
\bibitem[Pet19]{petrides-3}
R.~Petrides, {\em  Maximizing {S}teklov eigenvalues on surfaces}, J.~Differential Geom.\ {\bf 113},  2019, no.1, 95--188.
\bibitem[Wei54]{weinstock}
R.~Weinstock,
{\em Inequalities for a Classical Eigenvalue Problem}, 
J.~Rational Mech.~Anal., {\bf 3} (1954), 745--753.

\end{thebibliography}

\nocite{*}

\end{document}